\theoremstyle{cupthm}
\newtheorem{thm}{Theorem}[section]
\newtheorem{lemma}[thm]{Lemma}
\theoremstyle{cupdefn}
\theoremstyle{cuprem}
\numberwithin{equation}{section}
\newtheorem{conj}[thm]{Conjecture}
\begin{document}

\runningtitle{Crossing number of cartesian product of Sunlet and Star graphs}
\title{On the crossing number of the cartesian product of a Sunlet graph and a Star graph}
\cauthor 
\author[1]{Michael Haythorpe}
\address[1]{1284 South Road, Clovelly Park, 5042\\
College of Science and Engineering\\
Flinders University, Australia\email{michael.haythorpe@flinders.edu.au}}
\author[2]{Alex Newcombe}
\address[2]{1284 South Road, Clovelly Park, 5042\\
College of Science and Engineering\\
Flinders University, Australia\email{alex.newcombe@flinders.edu.au}}

\authorheadline{M. Haythorpe and A. Newcombe}

\begin{abstract}The exact crossing number is only known for a small number of families of graphs. Many of the families for which crossing numbers have been determined correspond to cartesian products of two graphs. Here, the cartesian product of the Sunlet graph, denoted $\mathcal{S}_n$, and the Star graph, denoted $K_{1,m}$, is considered for the first time. It is proved that the crossing number of $\mathcal{S}_n \Box K_{1,2}$ is $n$, and the crossing number of $\mathcal{S}_n \Box K_{1,3}$ is $3n$. An upper bound for the crossing number of $\mathcal{S}_n \Box K_{1,m}$ is also given.\end{abstract}

\classification{primary 05C10; secondary 68R10}

\keywords{Crossing Number, Cartesian Product, Star, Sunlet Graph}

\maketitle

\section{Introduction}

Consider a simple graph $G$ with vertex set $V(G)$ and edge set $E(G)$. A {\em drawing} is an embedding of the graph in the plane, in the sense that each vertex $v \in V(G)$ is assigned coordinates in the plane, and each edge $e \in E(G)$ is drawn as a curve starting and ending at the coordinates of its endpoints. A {\em good drawing} is one in which edges have at most one point in common, no more than two edges cross at a single point, and edges which share an endpoint do not cross. For a given drawing $D$ of the graph $G$, the {\em crossings} in the drawing, denoted $cr_D(G)$ can then be computed as the number of times two edges intersect at points other than at their endpoints. The {\em crossing number} $cr(G)$ of a graph $G$ is the smallest number of crossings over all possible drawings of $G$. It is well known that any drawing of $G$ which contains $cr(G)$ crossings is a good drawing.

The {\em crossing number problem}, being the problem of determining the crossing number of a graph, is known to be NP-hard \cite{gareyjohnson}; this is true even for graphs constructed by adding a single edge to a planar graph \cite{cabello}. Indeed, the crossing number problem is known to be notoriously difficult, and is still unsolved even for very small instances. For example, the crossing number of $K_{13}$ has still not been determined, although it is known to be either 223 or 225 \cite{mcquillan,abrego}. However, the crossing number has been determined for some infinite families of graphs. In many such cases, the family is created by taking the cartesian product of members of two smaller graph families. To the best of the authors' knowledge, the first such result published was due to Harary et al \cite{harary} in 1973, who conjectured that the crossing number of $C_m \Box C_n$, that is, the cartesian product of two arbitrarily large cycles, would be $(m-2)n$ for $n \geq m \geq 3$. To date, this conjecture remains unproven, although a number of partial results have been determined. Specifically, the conjecture is known to be true for $m \leq 7$, and also whenever $n \geq m(m-1)$ \cite{ringeisen,beineke,deanrichter,richterthomassen,anderson,anderson2,glebsky}. Other infinite graph families, for which the crossing number of their cartesian products have been studied, include Paths and Stars \cite{jendrolscerbova,klesc1991,bokal}, Complete graphs and Cycles \cite{wenping}, Cycles and Stars \cite{jendrolscerbova,klesc1991}, Wheels and Trees \cite{klesc2017}, and Cycles with the 2-power of Paths \cite{klesc2012}.

In the present work, we expand this growing literature by considering the cartesian product of a Sunlet graph and a Star graph. The Sunlet graph on $2n$ vertices, denoted $\mathcal{S}_n$ for $n \geq 3$, is constructed by attaching $n$ pendant edges to an $n$-cycle $C_n$; see Figure \ref{fig-sunlet} for an example of $\mathcal{S}_6$. The Star graph on $m+1$ vertices consists of one vertex of degree $m$ attached to $m$ vertices of degree 1. It is usually denoted $S_m$, however to avoid confusion with the notation for the Sunlet graph, we note that the Star graph is equivalent to the complete bipartite graph $K_{1,m}$, and use that notation instead. We will show that $cr(\mathcal{S}_n \Box K_{1,m}) \leq n\frac{m(m-1)}{2}$ for $n \geq 3$ and $m \geq 1$. We will also prove that the crossing number meets this bound precisely for $m = \{1, 2, 3\}$, and conjecture that it does so for all $m \in \mathbb{Z}+$.

\begin{figure}[h!]
\centering
    \includegraphics[width=0.175\textwidth]{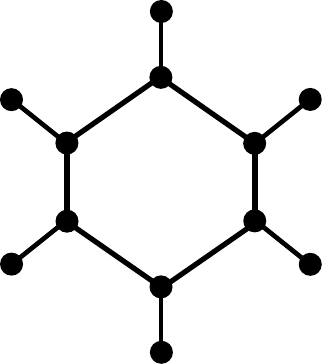}
\caption{The Sunlet graph, $\mathcal{S}_6$.\label{fig-sunlet}}
\end{figure}

\section{Upper Bound}\label{sec-upper}

We begin by providing an upper bound for $cr(\mathcal{S}_n \Box K_{1,m})$. In what follows, let the vertex labels of $K_{1,m}$ be $v_0$ for the vertex of degree $m$ and $v_1, v_2, \hdots, v_m$ for the vertices of degree 1. Let the vertex labels of $\mathcal{S}_n$ be $u_0, u_1, u_2, \hdots, u_{n-1}$ for the vertices on the cycle and let $u_i'$ denote the pendant vertex attached to $u_i$.

\begin{thm}The crossing number of $\mathcal{S}_n \Box K_{1,m}$ is no larger than $n\frac{m(m-1)}{2}$ for $n \geq 3$, $m \geq 1$.\label{thm-upper}\end{thm}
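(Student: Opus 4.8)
The plan is to construct an explicit good drawing of $\mathcal{S}_n \Box K_{1,m}$ with exactly $n\,\tfrac{m(m-1)}{2}$ crossings, which immediately gives the stated upper bound; the construction will be uniform in $m\ge 1$. Write $k=\lfloor m/2\rfloor$. The starting point is a concentric drawing of the subgraph induced by the cycle vertices, a copy of $C_n\Box K_{1,m}$: draw $m+1$ pairwise disjoint concentric circles $\Gamma_0,\Gamma_1,\dots,\Gamma_m$, where $\Gamma_j$ carries the $n$-cycle of layer $v_j$, i.e.\ the vertices $(u_0,v_j),\dots,(u_{n-1},v_j)$ placed in cyclic order at angular positions $0,\dots,n-1$, and choose the radii so that exactly $k$ of the circles $\Gamma_1,\dots,\Gamma_m$ lie inside $\Gamma_0$ and the remaining $m-k$ lie outside. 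For each $i$, draw the $m$ star edges $(u_i,v_0)(u_i,v_j)$ running radially from $(u_i,v_0)$; the (at most two) edges joining $\Gamma_0$ to an adjacent circle cross nothing, while the edge joining $\Gamma_0$ to the $t$-th circle on its inner side (resp.\ outer side) is pushed slightly to the clockwise side of the radius through $u_i$ and crosses the $t-1$ intervening circles. Summing over the $m$ star edges at each $u_i$, this part of the drawing contributes $\binom{k}{2}+\binom{m-k}{2}$ crossings per index $i$, and no crossings occur between different columns.

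Next, attach the $n$ pendant gadgets, the one at index $i$ being the pendant star on $(u_i',v_0),(u_i',v_1),\dots,(u_i',v_m)$ together with the rungs $(u_i',v_j)(u_i,v_j)$. Place $(u_i',v_0)$ in the annular band immediately outside $\Gamma_0$, close to the radius through $u_i$, and place each $(u_i',v_j)$ in the band adjacent to $\Gamma_j$ on the side facing $(u_i',v_0)$, so that every rung lies inside a single band and crosses nothing. Route the pendant-star edges $(u_i',v_0)(u_i',v_j)$ roughly radially but pushed to the counterclockwise side of the radius through $u_i$, i.e.\ to the opposite side from the star edges of the first paragraph. The only new crossings are then between these pendant-star edges and the circles: the edge to the pendant on the $t$-th inner circle must cross $\Gamma_0$ together with the $t-1$ inner circles nearer to $\Gamma_0$, contributing $\sum_{t=1}^{k}t=\binom{k+1}{2}$, and the edge to the pendant on the $t$-th outer circle crosses the $t-1$ nearer outer circles, contributing $\binom{m-k}{2}$. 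Hence each gadget adds $\binom{k+1}{2}+\binom{m-k}{2}$ crossings.

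It then remains to confirm that nothing else crosses. The delicate point is that the angular sector between the radii through $u_i$ and through $u_{i+1}$ contains three families of arcs: the bulging star edges of column $i$ (hugging the radius through $u_i$), the pendant gadget of column $i+1$ (hugging the radius through $u_{i+1}$), and pieces of $\Gamma_0,\dots,\Gamma_m$. One checks that the first two families can be confined to disjoint sub-sectors; that inside each family the arcs sharing an endpoint can be drawn nested and hence pairwise disjoint away from that endpoint; and that no rung leaves its band. Consequently every crossing of the drawing is one of those already counted. Adding up over $i$ and over the two parts of the construction, the number of crossings equals
\[
n\left(\binom{k}{2}+\binom{k+1}{2}+2\binom{m-k}{2}\right)=n\left(k^2+(m-k)(m-k-1)\right),
\]
and substituting $k=\lfloor m/2\rfloor$ and checking the cases $m$ even and $m$ odd yields $k^2+(m-k)(m-k-1)=\binom{m}{2}=\tfrac{m(m-1)}{2}$.

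I expect the last paragraph to be the only genuine work: once the side-conventions for the bulging star edges and the exact cell placement of the pendant vertices are pinned down, the rest reduces to the routine crossing count above. (The same drawing in fact achieves the crossing number exactly for $m\in\{1,2,3\}$, which is where the harder lower-bound arguments enter, but for the present theorem only the displayed count is needed.)
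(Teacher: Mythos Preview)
Your construction is correct and is essentially the same as the paper's: both start from the standard layered drawing of $C_n\Box K_{1,m}$ with $\lfloor m/2\rfloor$ layers on one side of the $v_0$-cycle and $\lceil m/2\rceil$ on the other, and then attach the pendant piece for each $i$ in the band adjacent to the $v_0$-cycle on the $\lceil m/2\rceil$ side, yielding the same crossing count $n\binom{m}{2}$. The only cosmetic difference is that the paper adds a single new vertex joined to all $(v_j,u_i)$ and then subdivides those edges to recover $\mathcal{S}_n\Box K_{1,m}$, whereas you place the pendant vertices $(u_i',v_j)$ explicitly and route the rungs separately; the resulting drawings and counts coincide.
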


\begin{proof}It is easy to check that $\mathcal{S}_n \Box K_{1,1}$ is planar; for instance, a planar drawing of $\mathcal{S}_6 \Box K_{1,1}$ is illustrated in Figure \ref{fig-K11}, which can obviously be extended for any $n$. It then suffices to give a procedure for drawing the graph $\mathcal{S}_n \Box K_{1,m}$, $m \geq 2$, so that the number of crossings meets the proposed upper bound.

\begin{figure}[h!]
\centering
    \includegraphics[width=0.225\textwidth]{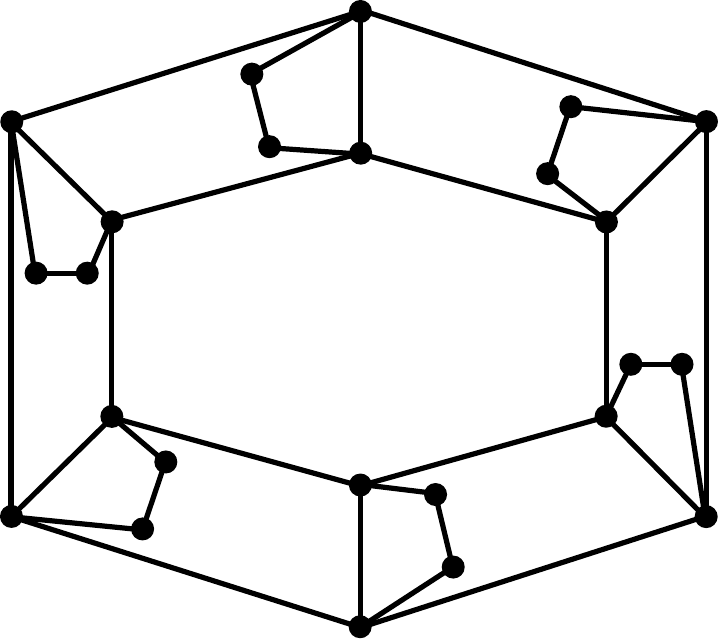}
\caption{Planar drawing of $\mathcal{S}_6 \Box K_{1,1}$.\label{fig-K11}}
\end{figure}

First, note that $\mathcal{S}_n \Box K_{1,m}$ contains $C_n \Box K_{1,m}$ as a subgraph. Begin by drawing the subgraph $C_n \Box K_{1,m}$ in the manner illustrated in Figure \ref{fig-CnK1m}(a). For a given $i = 0, 1, \hdots, n-1$, the thick edges represent $((v_0,u_i),(v_j,u_i))$ for $j = 0, 1, \hdots, m$. The dashed edges represent $((v_j,u_i),(v_j,u_{i+1}))$ and $((v_j,u_i),(v_j,u_{i-1}))$ for $j = 0, 1, \hdots, m$. Then, it is easy to see that the dashed edges can be joined to the corresponding sections for $i+1$ and $i-1$ to complete a drawing of $K_{1,m} \Box C_n$ without introducing any additional crossings. Hence, the number of crossings in this drawing of the subgraph $C_n \Box K_{1,m}$ is:

\begin{equation}
\label{consteq1}
\begin{aligned}
n\Bigg( \sum_{k=1}^{\left \lfloor \frac{m}{2} \right \rfloor-1} k + \sum_{k=1}^{\left \lceil \frac{m}{2} \right \rceil -1} k \Bigg) = n\left \lfloor \frac{m}{2} \right \rfloor \left \lfloor \frac{m-1}{2} \right \rfloor.
\end{aligned}
\end{equation}

\begin{figure}[h!]
\centering
    \includegraphics[width=0.7\textwidth]{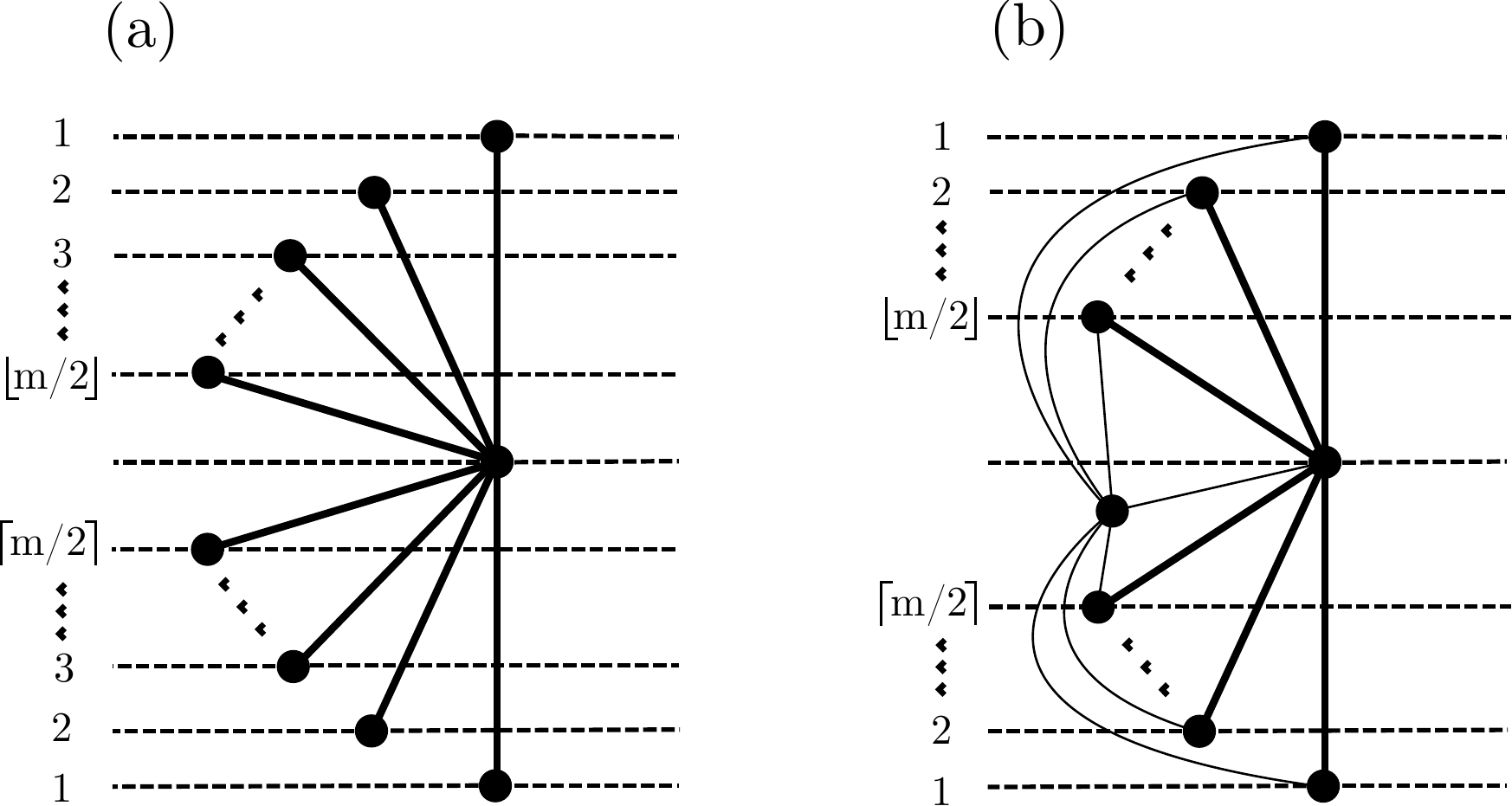}
\caption{In (a), the construction of a drawing of the subgraph $K_{1,m} \Box C_n$.  In (b), the extension which will be subdivided to produce a drawing of $K_{1,m} \Box \mathcal{S}_n$\label{fig-CnK1m}}
\end{figure}

Next, we extend this drawing to a drawing of $\mathcal{S}_n \Box K_{1,m}$ in the following way. For each $i = 0, 1, \hdots, n-1$, place a vertex in the region between the centre horizontal (dashed) edge $((v_0,u_i),(v_0,u_{i+1}))$ and the first thick edge on the side which possesses $\left\lceil m/2 \right\rceil$ vertices, and join this new vertex to each of the vertices $(v_j, u_i)$ for $j = 0, 1, \hdots, m$ as in Figure \ref{fig-CnK1m}(b). Then, the number of crossings in this graph is equal to:

\begin{equation}\label{eq-upper}
\begin{aligned}
n\Bigg(\left \lfloor \frac{m}{2} \right \rfloor \left \lfloor \frac{m-1}{2} \right \rfloor + \sum_{k=1}^{\left \lfloor \frac{m}{2} \right \rfloor} k + \sum_{k=1}^{\left \lceil \frac{m}{2} \right \rceil -1} k \Bigg)\\
=n \left \lfloor \frac{m}{2} \right \rfloor\Bigg( \left \lfloor \frac{m-1}{2} \right \rfloor + \left \lceil \frac{m}{2} \right \rceil \Bigg) \\
=n \frac{m(m-1)}{2}.
\end{aligned}
\end{equation}

Finally, if every new edge is subdivided, except for the ones emanating from $(v_0,u_i)$ for $i = 0, 1, \hdots, n-1$, the resulting graph is isomorphic to $\mathcal{S}_n \Box K_{1,m}$. Since subdividing edges does not alter the number of crossings, we conclude that it is possible to draw $\mathcal{S}_n \Box K_{1,m}$ with $n\frac{m(m-1)}{2}$ crossings.
\end{proof}

\section{Exact results}

We now consider $\mathcal{S}_n \Box K_{1,m}$ for some small values of $m$, and show that the crossing number coincides precisely with the upper bound from Section \ref{sec-upper}. Denote that upper bound by $U(n,m) := n\frac{m(m-1)}{2}$. As noted previously, $\mathcal{S}_n \Box K_{1,1}$ is planar; see Figure \ref{fig-K11}. This agrees with $U(n,1) = 0$. Next, we will consider the cases when $m = 2$ and $m = 3$.

In what follows, we will utilise some properties of subgraphs of $\mathcal{S}_n \Box K_{1,m}$, which we denote by $H_i$ for each $i = 0,1,2,\hdots,n-1.$ In particular, $H_i$ is defined as the subgraph induced by the union of the following, disjoint, sets of edges:

\begin{itemize}
\item[] $a_i := \{\big((v_j,u_i),(v_j,u_{i+1})\big) \mid j=0,1,\dots,m\}$
\item[] $b_i := \{\big((v_j,u_i),(v_j,u'_{i})\big) \mid j=0,1,\dots,m\}$
\item[] $b'_i := \{\big((v_0,u'_i),(v_j,u'_{i})\big) \mid j=1,\dots,m\}$
\item[] $c_i := \{ \big((v_j,u_i),(v_j,u_{i-1})\big) \mid j=0,1,\dots,m\}$
\item[] $t_{i} := \{ \big((v_0,u_{i}),(v_j,u_{i})\big) \mid j=1,\dots,m\}$
\item[] $t_{i+1} := \{ \big((v_0,u_{i+1}),(v_j,u_{i+1})\big) \mid j=1,\dots,m\}$
\item[] $t_{i-1} := \{ \big((v_0,u_{i-1}),(v_j,u_{i-1})\big) \mid j=1,\dots,m\}$
\end{itemize}
A detailed illustration of $H_i$, for the case $m=3$, is displayed in Figure \ref{fig-Hi}.  For each $i=0,1,2,\dots,n-1$, there is a corresponding $H_i$ in $\mathcal{S}_n \Box K_{1,m}$ and $H_i$ and $H_j$ contain common edges when $j=i+1$ or $j=i-1$.

\begin{figure}[h!]
\centering
    \includegraphics[width=0.5\textwidth]{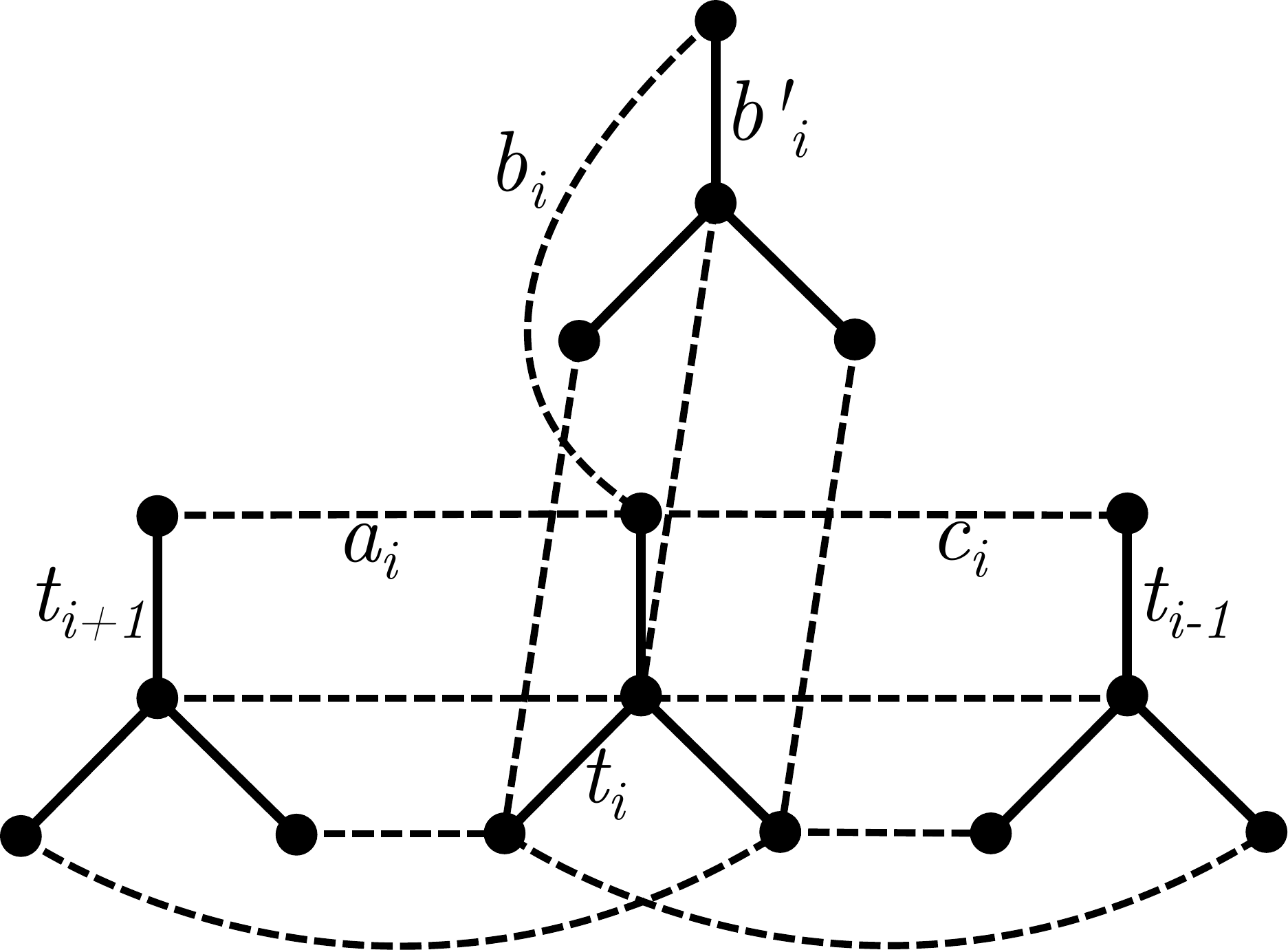}
\caption{The subgraph $H_i$ of $\mathcal{S}_n \Box K_{1,3}$.  The labels for each set of edges lie next to one edge belonging to that set. In this drawing, the thick lines correspond to the sets $t_{i-1}, t_i, t_{i+1}$ and $b'_i$.\label{fig-Hi}}
\end{figure}

We now consider the case when $m = 2$. Note that $U(n,2) = n$. In what follows, we use the following notation: consider a drawing $D$ of a graph which contains two edge sets $a$ and $b$.  Then $cr_D(a)$ is equal to the number of crossings on the edges of $a$ in $D$.  Similarly, $cr_D(a,b)$ is equal to the number of crossings in $D$ between edge-pairs, such that one edge is contained in $a$ and the other is contained in $b$.

\begin{lemma}The crossing number of $\mathcal{S}_n \Box K_{1,2}$ is equal to $n$.\end{lemma}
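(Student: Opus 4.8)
The bound $cr(\mathcal{S}_n\Box K_{1,2})\le n$ is precisely the case $m=2$ of Theorem~\ref{thm-upper}, since $U(n,2)=n$, so it remains to prove the matching lower bound $cr(\mathcal{S}_n\Box K_{1,2})\ge n$. The plan is to fix an optimal (hence good) drawing $D$ of $G:=\mathcal{S}_n\Box K_{1,2}$, write $R:=C_n\Box K_{1,2}$ for the (planar) subgraph of $G$ obtained by deleting all pendant vertices, and work with the subgraphs $H_i$. For $m=2$ one checks that $H_i\cong K_{1,2}\Box K_{1,3}$, and that this graph is non-planar: contracting each of the three ``leaf fibres'' $\{(v_j,u_{i-1})\mid j=0,1,2\}$, $\{(v_j,u_{i+1})\mid j=0,1,2\}$, $\{(v_j,u'_i)\mid j=0,1,2\}$ --- each of which is connected, via $t_{i-1}$, $t_{i+1}$ and $b'_i$ respectively --- to a single vertex yields a graph containing $K_{3,3}$, with parts $\{(v_0,u_i),(v_1,u_i),(v_2,u_i)\}$ and the three contracted vertices. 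Hence $cr_D(H_i)\ge 1$ for every $i=0,1,\dots,n-1$.

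Since the $H_i$ overlap, $\sum_i cr_D(H_i)\ge n$ does not by itself give $cr_D(G)\ge n$, and the first task is to bound how badly a single crossing can be overcounted. I would record that $H_i$ and $H_j$ share an edge only when $j\equiv i\pm 1\pmod n$ --- in which case they share the fibre-matching $a_i$ or $a_{i-1}$ together with two of the stars $t_k$ --- or $j\equiv i\pm 2\pmod n$, in which case they share a single star $t_k$, whose two edges meet at $(v_0,u_k)$ and hence do not cross each other in a good drawing. A routine inspection of the possible memberships of a crossing pair $\{e,f\}$ then shows that a crossing of $D$ lies in at most two of the subgraphs $H_i$, and in exactly two only when both $e$ and $f$ are edges of $R$ (indeed of a common copy of the $3\times 2$ grid $t_i\cup a_i\cup t_{i+1}$). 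Writing $\rho$ for the number of such doubly-counted crossings, we obtain $cr_D(G)\ \ge\ \sum_i cr_D(H_i)\ -\ \rho\ \ge\ n-\rho$, and $\rho\le cr_D(R)$.

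It remains to handle the frame crossings, that is, to reduce to the case $cr_D(R)=0$; I expect this to be the crux. The structural fact to use is that $R=C_n\Box K_{1,2}$ has, up to reflection, a unique planar embedding --- three nested copies of $C_n$ joined by radial matchings --- and that in it no face is incident with both $(v_1,u_i)$ and $(v_2,u_i)$, for any $i$. From this, two things follow. First, if $cr_D(R)=0$ then in $D$ the pendant structure at $u_i$ (the edges $b_i\cup b'_i$ together with the vertices $(v_j,u'_i)$, a tree attached to $R$ only at $(v_0,u_i),(v_1,u_i),(v_2,u_i)$) cannot be drawn crossing-free, since it would have to lie in a single face of $R$ incident with all three of those vertices; thus each of the pairwise-disjoint edge-sets $b_i\cup b'_i$ is involved in at least one crossing, and these crossings are automatically distinct for distinct $i$, giving $cr_D(G)\ge n$. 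Second, the rigidity of the embedding should let one remove any crossing between two edges of $R$ by a local re-routing that increases neither the total number of crossings nor the crossings on pendant edges, so that an optimal drawing with $cr_D(R)=0$ exists; carrying out this re-routing cleanly --- in particular verifying that the pendant edges $b_j,b'_j$ are not forced to pick up new crossings --- is the main technical obstacle, and is where I would concentrate the detailed work.
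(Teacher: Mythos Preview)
Your overall strategy---find a $K_{3,3}$ minor in each $H_i$ and then control the overcounting---is reasonable, but it leaves you with exactly the obstacle you flag: disposing of the frame crossings $cr_D(R)$. The re-routing step you describe is genuinely delicate (and your ``automatically distinct'' claim for the pendant crossings also needs care: a single crossing between $b_i\cup b'_i$ and $b_j\cup b'_j$ would be attributed to both $i$ and $j$). As written, the proof is incomplete precisely where you say it is.

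The paper sidesteps all of this with one small but decisive change: instead of $H_i$, use $H'_i:=H_i\setminus t_i$. This $H'_i$ is \emph{homeomorphic} to $K_{3,3}$ (the two parts being $\{(v_j,u_i):j=0,1,2\}$ and $\{(v_0,u_{i+1}),(v_0,u'_i),(v_0,u_{i-1})\}$), so any drawing has a crossing between two independent edges of the $K_{3,3}$; in $H'_i$ this means a crossing between edges coming from two \emph{different} sets among $(a_i\cup t_{i+1})$, $(b_i\cup b'_i)$, $(c_i\cup t_{i-1})$. The point of deleting $t_i$ is that now $H'_i\cap H'_{i+1}=a_i$ and $H'_i\cap H'_{i+2}=t_{i+1}$, each contained in a \emph{single} one of the three sets. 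Hence any crossing of the required type in $H'_i$ cannot lie in any other $H'_j$, and summing gives $cr_D(G)\ge n$ with no double counting, no case analysis on $cr_D(R)$, and no re-routing.

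So the missing idea is not a longer argument but a shorter one: drop $t_i$ before you count. Your inequality $cr_D(G)\ge n-cr_D(R)$ is correct but weaker than needed; trimming $H_i$ to $H'_i$ makes the overlap small enough that the loss term $\rho$ vanishes outright.
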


\begin{proof}From Theorem \ref{thm-upper}, we know that $cr(\mathcal{S}_n \Box K_{1,2}) \leq n$. Hence, the task now is to show that the reverse inequality holds. Let $H'_i$ be the subgraph $H_i$ without the edges $t_i$. An illustration of $H'_i$ is displayed in Figure \ref{fig-H'i}.

\begin{figure}[h!]
\centering
    \includegraphics[width=0.3\textwidth]{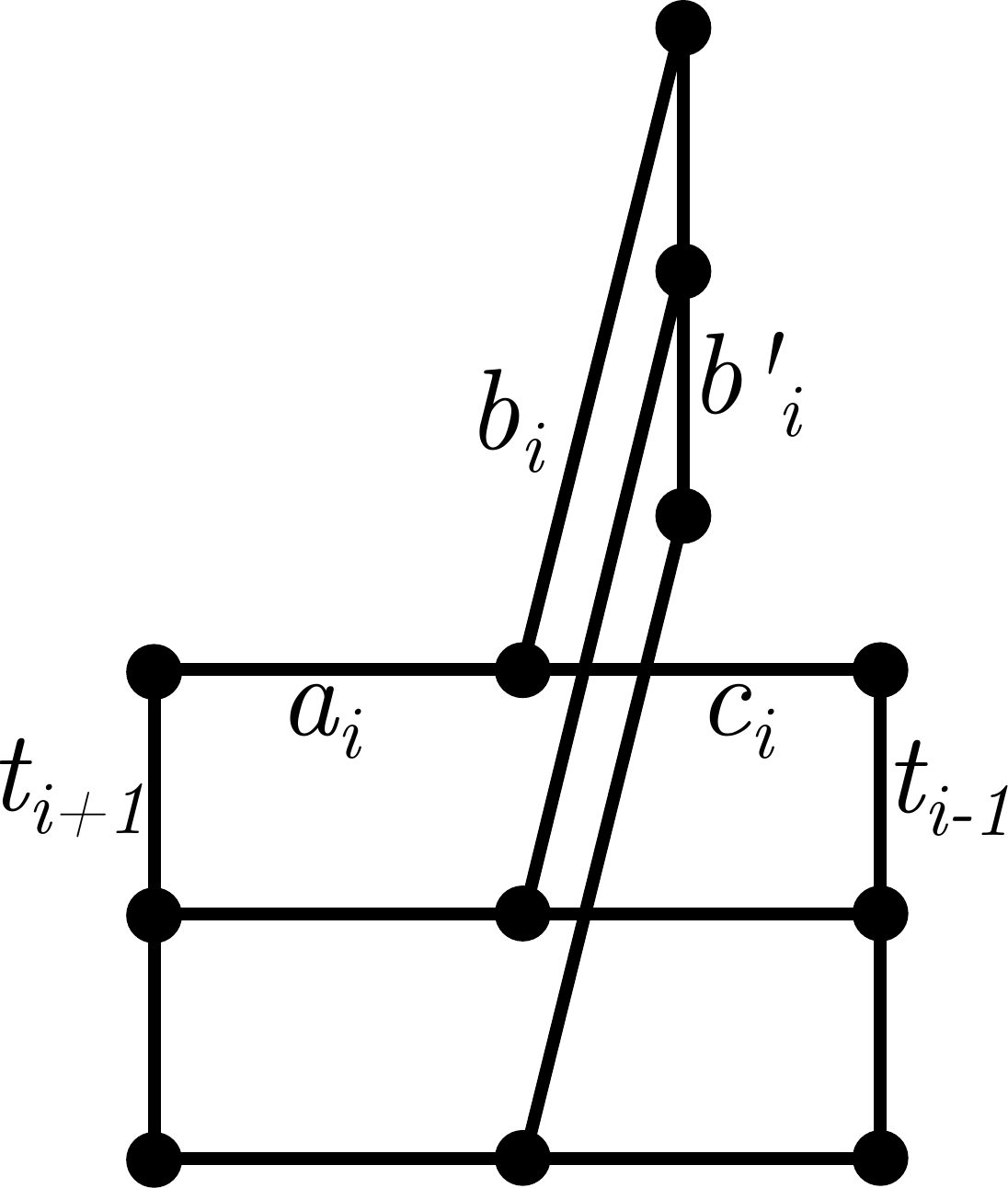}
\caption{The subgraph $H'_i$ of $\mathcal{S}_n \Box K_{1,2}$.  The labels for each set of edges lie next to one edge belonging to that set.\label{fig-H'i}}
\end{figure}

It is clear that $H'_i$ is homeomorphic to $K_{3,3}$, and so there exists at least one crossing in the subdrawing $D'$ of $H'_i$. Furthermore, at least one crossing in $D'$ involves two edges which come from the edge sets $(a_i \cup t_{i+1}), (b_i \cup b'_i)$ and $(c_i \cup t_{i-i})$, but do not both come from the same edge set. That is,

\begin{equation}
\label{eqsnam1}
cr_{D'}\big((a_i \cup t_{i+1}),(b_i \cup b'_{i})\big) + cr_{D'}\big((a_i \cup t_{i+1}),(c_i \cup t_{i-1})\big) + cr_{D'}\big((b_i \cup b'_{i}),(c_i \cup t_{i+1})\big) \geq 1.
\end{equation}

Hence, it is clear that there is at least one crossing in each $H'_i$ which does not occur in any other $H'_j$ for $i \neq j$, which leads immediately to the result.\end{proof}

Next, we consider the case when $m = 3$. Note that $U(n,3) = 3n$. In order to handle this case, we first need to prove two intermediate results, Lemmas \ref{lem-K133}--\ref{lem-3n}.

\begin{lemma}
\label{lem-K133}
For $m = 3$, consider the following four edge sets: $(a_i \cup t_{i+1})$, $(b_i \cup b'_{i})$, $(c_i \cup t_{i-1})$ and ${t_i}$. Then, in any good drawing of the subgraph $H_i$, there are at least 3 crossings for which the two edges involved in the crossing are not in the same set.
\end{lemma}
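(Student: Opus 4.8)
The plan is to analyze the structure of $H_i$ for $m=3$ and show that the four edge sets $(a_i\cup t_{i+1})$, $(b_i\cup b'_i)$, $(c_i\cup t_{i-1})$ and $t_i$ force at least three ``mixed'' crossings in any good drawing. First I would observe, as in the $m=2$ case, that each of the pairwise unions of these sets contains a subdivision of $K_{3,3}$; more precisely, I would identify several subgraphs of $H_i$, each homeomorphic to $K_{3,3}$ or $K_5$, whose edge sets are distributed among the four given sets in a controlled way. The vertex $(v_0,u_i)$ has degree $m+3 = 6$ in $H_i$ (it meets $t_i$, $a_i$, $b_i$, $c_i$), and the three ``leaf'' copies $(v_j,u_i)$ for $j=1,2,3$ each have degree $4$, so $H_i$ is quite dense around the central fibre; the natural candidates for Kuratowski subgraphs use $(v_0,u_i)$ and the three $(v_j,u_i)$ together with the ``outer'' vertices reached through $a_i$, $c_i$, $b_i$, $b'_i$.

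The core combinatorial step is to set up a counting argument over a family of Kuratowski subgraphs. I would exhibit (say) three or more subgraphs $F_1, F_2, \dots$ of $H_i$, each homeomorphic to $K_{3,3}$, such that (i) each $F_\ell$ forces at least one crossing, (ii) in each $F_\ell$ that forced crossing must be between edges from two different sets among the four (because within any single one of the four sets the induced subgraph of $F_\ell$ is a forest, hence planar and crossing-free in isolation — here one uses that $a_i, b_i, c_i, t_{i\pm1}, b'_i, t_i$ are individually star-like or matching-like), and (iii) the subgraphs are arranged so that a single crossing cannot simultaneously discharge the obligations of three of them. A clean way to get (iii) is an edge-disjointness or near-disjointness argument: if the $F_\ell$ are chosen so that the relevant ``mixed'' crossings they force lie on edge pairs that cannot coincide, then the three forced crossings are distinct. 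Alternatively, I would argue directly: suppose for contradiction that at most two mixed crossings occur; delete from the drawing the (at most four) edges participating in those crossings, one well-chosen edge per crossing, to obtain a plane subdrawing of $H_i$ minus two edges, and then exhibit a $K_{3,3}$- or $K_5$-subdivision surviving in that subgraph, contradicting planarity. This requires checking that $H_i$ remains non-planar after deleting any two edges that could be the ``mixed'' crossing edges — equivalently, a small case analysis on which pairs of the four sets the crossings could come from.

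The main obstacle I anticipate is case (iii): ruling out the possibility that a single crossing (or a single pair of crossings) is ``shared'' among the $K_{3,3}$'s and thereby covers three of the four-set obligations at once. Because the four sets overlap in their incident vertices (everything touches the fibre over $u_i$), a crossing between, say, an edge of $b_i$ and an edge of $t_i$ could plausibly be the forced mixed crossing for several Kuratowski subgraphs simultaneously. Handling this will likely require a finite but slightly tedious case distinction on the six unordered pairs of the four sets, showing in each configuration of ``only two mixed crossings'' that some $K_{3,3}$- or $K_5$-subdivision of $H_i$ avoids both crossing edges. I would organize this by first reducing to the subdrawing $D_i$ of $H_i$, noting $H_i$ minus $t_i$ is the $H'_i$-type graph already known to be non-planar, and then carefully accounting for how adding back the three edges of $t_i$ (which emanate from the degree-$6$ vertex $(v_0,u_i)$) creates the additional forced crossings. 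A counting identity of the shape ``(number of edge-disjoint Kuratowski subgraphs) $\le$ (number of crossings not inside a single set)'' would then close the argument, with the edge-disjointness (or controlled overlap) of the chosen subgraphs being the fact that needs the most care to verify.

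Once Lemma \ref{lem-K133} is in hand, the transition to the global bound $cr(\mathcal{S}_n\Box K_{1,3})\ge 3n$ will follow the same pattern as the $m=2$ proof: each $H_i$ contributes at least $3$ crossings of a ``local'' type, and by choosing the four-set partition so that a mixed crossing of $H_i$ cannot be a mixed crossing of $H_j$ for $j\ne i$ (the shared edges between consecutive $H_i$'s being precisely the $a$/$c$ edges, which can be bundled to avoid double-counting), one sums to get $3n$. So the real content, and the only genuinely delicate part, is the discharging/case analysis inside the single subgraph $H_i$ described above.
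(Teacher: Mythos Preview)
You have missed the structural observation that reduces this lemma to a known result. For $m=3$ the subgraph $H_i$ is homeomorphic to $K_{1,3,3}$: suppressing the degree-$2$ vertices $(v_j,u_{i+1})$, $(v_j,u_{i-1})$, $(v_j,u'_i)$ for $j\ge 1$ leaves the seven branch vertices partitioned as $\{(v_0,u_i)\}$, $\{(v_1,u_i),(v_2,u_i),(v_3,u_i)\}$, $\{(v_0,u_{i+1}),(v_0,u'_i),(v_0,u_{i-1})\}$, with every pair from distinct parts adjacent. Asano proved $cr(K_{1,3,3})=3$. Under this identification the sets $a_i\cup t_{i+1}$, $b_i\cup b'_i$, $c_i\cup t_{i-1}$ are precisely the stars at $(v_0,u_{i+1})$, $(v_0,u'_i)$, $(v_0,u_{i-1})$, while $t_i$ consists of the three remaining edges, all incident to $(v_0,u_i)$. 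Hence two $K_{1,3,3}$-edges lying in the same set are always incident, and since the three unavoidable crossings in any drawing of $K_{1,3,3}$ occur between non-incident pairs, they are automatically between different sets. That is the paper's entire proof of the lemma.

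Your plan --- assembling several $K_{3,3}$- or $K_5$-subdivisions and then running a discharging or deletion argument to prevent one crossing from serving multiple Kuratowski obligations --- could perhaps be pushed through, but it is far more laborious, and the obstacle you yourself flag (shared crossings) is precisely what the single $K_{1,3,3}$ identification eliminates at a stroke. Your closing paragraph is also too optimistic: a crossing between an edge of $t_i$ and an edge of $t_{i+1}$ is a mixed crossing for \emph{both} $H_i$ and $H_{i+1}$, so one cannot simply sum Lemma~\ref{lem-K133} over $i$ as in the $m=2$ argument. The paper needs a further lemma controlling crossings of $t_{i\pm 1}$ against $t_i\cup b_i\cup b'_i$ under a boundedness hypothesis, followed by an induction on $n$, before the $3n$ lower bound is obtained.
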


\begin{proof}
$H_i$ is homeomorphic to $K_{1,3,3}$, and Asano \cite{asano} proved that $\text{cr}(K_{1,3,3})=3$.  Any drawing of $H_i$ corresponds to some drawing of $K_{1,3,3}$.  Any drawing of $K_{1,3,3}$ has at least three crossings between pairs of edges which are not incident.  These crossings correspond precisely to crossings in the drawing of $H_i$ which satisfy the Lemma.
\end{proof}

\begin{lemma}For $n \geq 3$, let $D$ be a drawing of $\mathcal{S}_n \Box K_{1,3}$. If, for each $i = 0, 1, 2, \hdots, n-1$, the edges $t_i \cup b_i \cup b'_i$ are crossed two or fewer times in $D$, then $D$ has at least $3n$ crossings.\label{lem-3n}\end{lemma}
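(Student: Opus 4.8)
The plan is to exploit the structure of the $H_i$'s together with Lemma~\ref{lem-K133}, and to carefully account for how crossings on the ``horizontal'' edge sets $a_i$ and $c_i$ get shared between consecutive $H_i$ and $H_{i+1}$. For each $i$, write $S_i := t_i \cup b_i \cup b'_i$ (the ``spoke/pendant'' edges of $H_i$ that are private to $H_i$, not shared with any $H_j$), and recall that $a_i$ is shared between $H_i$ and $H_{i+1}$, while $c_i = a_{i-1}$ is shared between $H_i$ and $H_{i-1}$. The hypothesis is exactly that $cr_D(S_i) \le 2$ for every $i$. By Lemma~\ref{lem-K133}, for each $i$ there are at least $3$ crossings in the subdrawing of $H_i$ between edges lying in distinct members of the partition $\{a_i\cup t_{i+1},\, b_i\cup b'_i,\, c_i\cup t_{i-1},\, t_i\}$; denote by $X_i$ this set of ``good'' crossings of $H_i$. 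First I would classify each crossing in $X_i$ by how many of its two edges lie in $S_i$: type~2 (both edges in $S_i$), type~1 (exactly one edge in $S_i$, the other in $a_i\cup c_i$, since $t_{i\pm1}$ together with the $S_i$-edges is all of $H_i$), or type~0 (both edges in $a_i\cup c_i$, hence in $t_{i-1}\cup a_{i-1}$ or $t_{i+1}\cup a_i$).

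The key bookkeeping step is to bound the overcounting when we sum $|X_i|$ over all $i$. A given crossing of $D$ can lie in $X_i$ for at most two values of $i$, and this happens only when both its edges lie in $a_i$ (shared by $H_i,H_{i+1}$) or both lie in $c_i=a_{i-1}$ (shared by $H_{i-1},H_i$) --- note a crossing between an $a_i$-edge and a $c_i$-edge of $H_i$ is an $a_{i-1}$--$a_i$ crossing and is \emph{not} a ``good'' crossing for $H_{i+1}$ or $H_{i-1}$ because within those subgraphs those two edges are incident or, more carefully, do not both lie in a single shared block. So the only multiply-counted good crossings are the ``pure $a_i$'' crossings, i.e.\ crossings of two edges of $a_i$ with each other; call the number of these $p_i$, and note each such crossing is counted (at most) twice in $\sum_i |X_i|$, once in $X_i$ and once in $X_{i+1}$. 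Hence
\begin{equation*}
cr_D(\mathcal{S}_n\Box K_{1,3}) \;\ge\; \sum_{i=0}^{n-1}|X_i| \;-\; \sum_{i=0}^{n-1} p_i \;\ge\; 3n - \sum_{i=0}^{n-1} p_i .
\end{equation*}
It remains to absorb the $\sum p_i$ term. For this I would use the hypothesis $cr_D(S_i)\le 2$: if some $p_i>0$, then the $3$ good crossings of $H_i$ cannot all be of type~2 (those would need $cr_D(S_i)\ge 3$), so $H_i$ contributes, besides its pure-$a_i$ crossings, extra ``good'' crossings involving an edge of $c_i\cup t_{i-1}$ or $t_i$ --- and crossings on $c_i=a_{i-1}$, while possibly shared with $H_{i-1}$, are crossings on horizontal edges that are \emph{not} pure-$a_{i-1}$ unless the partner edge is also in $a_{i-1}$. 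The aim is to show that each pure-$a_i$ crossing removed from the count is compensated by an honest crossing of $D$ not yet accounted for, most cleanly by arguing that whenever $p_i>0$ the number of good crossings of $H_i$ that are \emph{not} pure-$a_i$ is at least $p_i$ as well (using $cr_D(S_i)\le 2$ to rule out the degenerate configurations), so that $|X_i|-p_i$ still contributes its full share after we re-pool the count per-$i$. Summing the corrected per-$i$ bound then yields at least $3n$ crossings.

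The main obstacle I expect is precisely this last compensation argument: it requires a careful case analysis of how the $3$ guaranteed good crossings of $H_i$ (from the $K_{1,3,3}$ bound) can distribute among the four blocks when $cr_D(S_i)\le 2$, and in particular ruling out that all the ``slack'' is hidden in pure-$a_i$ crossings that are then double-subtracted. A clean way to organise this is: let $g_i=|X_i|$, let $g_i^{(2)}$ be the number of type-$2$ (both in $S_i$) good crossings; then $g_i^{(2)} \le cr_D(S_i) \le 2$, so $g_i - g_i^{(2)} \ge g_i - 2 \ge 1$, i.e.\ $H_i$ has at least one good crossing involving a horizontal edge $a_i$ or $c_i$. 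One then shows this forces enough distinct crossings globally. If the naive $3n - \sum p_i$ bound turns out not to close directly, the fallback is to additionally invoke that a pure-$a_i$ crossing is itself a crossing \emph{on} an edge of $a_i$ (hence on a horizontal edge), and re-run the counting with the horizontal edges $\bigcup_i a_i$ handled by a separate global argument (e.g.\ comparing with $cr(C_n\Box K_{1,3})$ via \eqref{consteq1}), but I anticipate the per-$H_i$ analysis above suffices.
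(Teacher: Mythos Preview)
Your overall strategy --- use Lemma~\ref{lem-K133} to get $|X_i|\ge 3$ for each $i$, then control the overcount when summing --- is the right shape, but the bookkeeping step contains a genuine error that breaks the argument.

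First, ``pure $a_i$'' crossings are \emph{not} in $X_i$ at all: both edges lie in the single block $a_i\cup t_{i+1}$ of the partition from Lemma~\ref{lem-K133}, so they are never good crossings and cannot cause overcounting. The crossings that actually get counted in both $X_i$ and $X_{i+1}$ are those between an edge of $t_i$ and an edge of $t_{i+1}$. Indeed, $E(H_i)\cap E(H_{i+1}) = a_i\cup t_i\cup t_{i+1}$; in the $H_i$-partition this splits as $(a_i\cup t_{i+1})$ and $t_i$, while in the $H_{i+1}$-partition it splits as $(a_i\cup t_i)$ and $t_{i+1}$. A crossing is good for both only when one edge lies in $t_i$ and the other in $t_{i+1}$. (For small $n$ there is a further overlap between $X_i$ and $X_{i+2}$ coming from $t_{i+1}\times t_{i-1}$ crossings.) Your compensation scheme, once aimed at the correct target $q_i:=cr_D(t_i,t_{i+1})$, does not close: the hypothesis gives only $q_{i-1}+q_i\le cr_D(S_i)\le 2$, hence $\sum_i q_i\le n$, and the bound $\sum_i|X_i|-\sum_i q_i\ge 3n-n=2n$ falls short. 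The observation $g_i - g_i^{(2)}\ge 1$ likewise only yields one uniquely counted crossing per $i$, not three. The fallback to \eqref{consteq1} does not help either, since that equation is an upper bound for $cr(C_n\Box K_{1,3})$, not a lower bound.

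What is missing is a genuinely topological step. The paper's proof uses the hypothesis $cr_D(S_i)\le 2$ not merely as a numerical cap but to force $cr_D(t_{i\pm1},S_i)=0$ for every $i$: if $t_{i+1}$ crossed $S_i$ once or twice, then the $2$-connectedness of the subgraph induced by $S_i$, together with the fact that $S_i\cup c_i\cup t_{i-1}$ is a copy of $P_2\Box K_{1,3}$ (which has crossing number~$1$) and that $b_{i+1}\cup b'_{i+1}$ must reach all endpoints of $t_{i+1}$, would force at least three crossings on $S_i$, a contradiction. Once $cr_D(t_{i\pm1},S_i)=0$ is established, the overcount vanishes entirely (in particular $q_i=0$), the inequality from Lemma~\ref{lem-K133} simplifies, and the $3n$ bound follows by summing disjoint contributions. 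Your proposal never invokes any such structural/topological argument, and without it the counting cannot be made to work.
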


\begin{proof}
Let $F_i$ denote the edge set $t_i \cup b_i \cup b'_i$. Note that $F_i$ is a subgraph of $H_i$. Then, from Lemma \ref{lem-K133}, we have
\begin{equation}
\label{k331-eq1}
cr_D\big(a_i \cup t_{i+1},F_i\big)+cr_D\big(c_i \cup t_{i-1},F_i\big)+cr_D\big((a_i\cup t_{i+1}),(c_i \cup t_{i-1})\big)+cr_D\big(F_i ,F_i\big) \geq 3.
\end{equation}

Assume that $cr_D(F_i) \leq 2$ for all $i = 0,1,2,\hdots,n-1$.  It will be shown that if $cr_D\big(t_{i+1},F_i\big) \neq 0$, or if $cr_D\big(t_{i-1},F_i\big) \neq 0$, then a contradiction arises.

Suppose that $cr_D\big(t_{i+1},F_i\big) = 1$.  Note that the edges of $b_{i+1}$ link to all of the endpoints of $t_{i+1}$.  Since the subgraph induced by $F_i$ is 2-connected, it is clear that it is impossible to draw $(b_{i+1} \cup b'_{i+1})$ without creating an additional crossing on the edges of $F_i$.  Since the subgraph induced by $F_i \cup c_i \cup t_{i-1}$ is isomorphic to $P_2 \Box K_{1,3}$, where $P_2$ denotes the path graph on $3$ vertices, and $cr(P_2 \Box K_{1,3})=1$ \cite{jendrolscerbova}, the following also holds

$$cr_D\big(c_i \cup t_{i-1},F_i\big)+cr_D\big(F_i,F_i\big) \geq 1.$$

This would imply that $F_i$ is crossed at least three times, but by assumption, $cr_D(F_i) \leq 2$. Hence, $cr_D\big(t_{i+1},F_i\big) \neq 1$. An analogous argument can be made for $t_{i-1}$ which, similarly, implies that $cr_D\big(t_{i-1},F_i\big) \neq 1$ as well.

Suppose that $cr_D\big(t_{i+1},F_i\big) = 2$. Then, since $cr_D(F_i) \leq 2$, it must be the case that $cr_D(F_i,F_i) = 0$, and hence without loss of generality, the subdrawing of the subgraph induced by $F_i$ is equivalent to the drawing displayed in Figure \ref{fig-Fiplanar}.

\begin{figure}[h!]
\centering
    \includegraphics[width=0.3\textwidth]{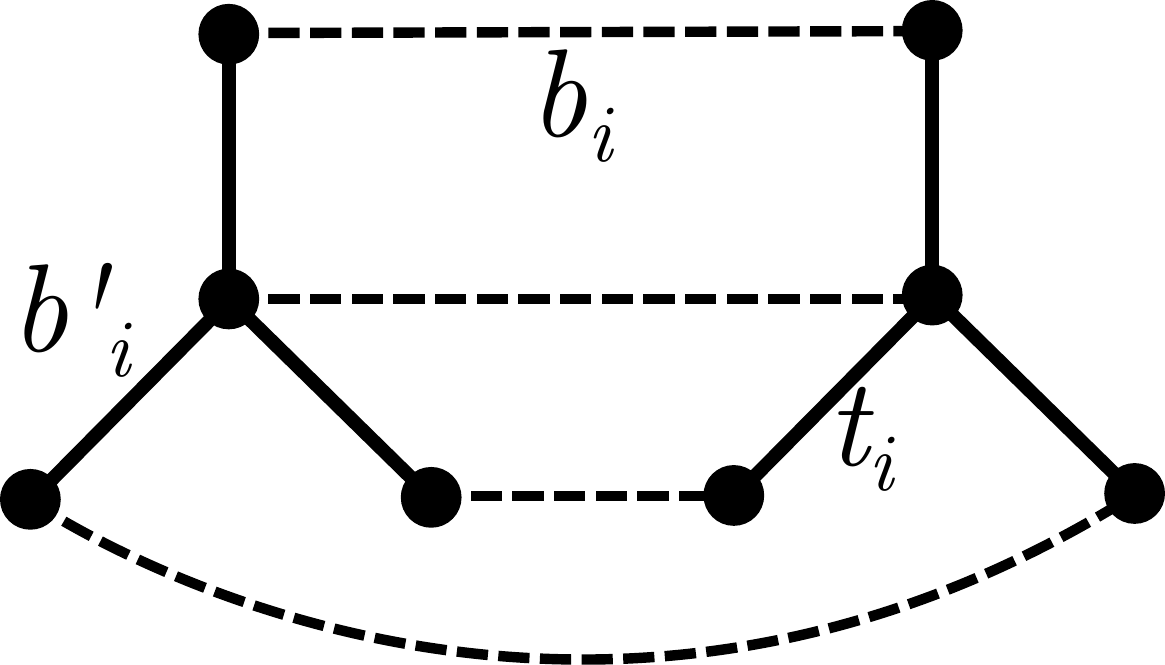}
\caption{The drawing of the subgraph induced by $F_i$, if $F_i$ is not crossed by itself.  \label{fig-Fiplanar}}
\end{figure}

Now consider the rest of the subgraph $H_i$, which includes edge sets $(a_i \cup t_{i+1})$ and $(c_i \cup t_{i-1})$. Note that the edges $c_i$ link to all of the endpoints of $t_i$, and these do not lie on a common face of $D$, so $(c_i \cup t_{i-1})$ cannot be drawn without crossing $F_i$ at least once. This would imply that $F_i$ is crossed at least three times, but by assumption, $cr_D(F_i) \leq 2$.  Hence, $cr_D\big(t_{i+1},F_i\big) \neq 2$. An analogous argument can be made for $t_{i-1}$ which, similarly, implies that $cr_D\big(t_{i-1},F_i\big) \neq 2$ as well.

Then, since $cr_D(F_i) \leq 2$, the only remaining possibility is that $cr_D\big(t_{i+1},F_i\big) = cr_D\big(t_{i-1},F_i\big) = 0$, and so (\ref{k331-eq1}) simplifies to

\begin{equation}
\label{k331-eq2}
cr_D\big(a_i,F_i\big)+cr_D\big(c_i,F_i\big)+cr_D\big((a_i\cup t_{i+1}),(c_i \cup t_{i-1})\big)+cr_D\big(F_i,F_i\big) \geq 3.
\end{equation}

It can be easily seen that any crossing counted by the left hand side of (\ref{k331-eq2}) is not counted for any other $j \neq i$.  Hence summing (\ref{k331-eq2}) over $i=0,1,2,\dots,n-1$ provides the result.
\end{proof}

Finally, we are ready to propose the theorem for $m = 3$.

\begin{thm}For $n \geq 3$, the crossing number of $\mathcal{S}_n \Box K_{1,3}$ is equal to $3n$.\end{thm}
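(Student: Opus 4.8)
The plan is to combine the upper bound of Theorem~\ref{thm-upper}, which for $m=3$ reads $cr(\mathcal{S}_n\Box K_{1,3})\le U(n,3)=3n$, with a matching lower bound; the latter is the work. Fix an optimal drawing $D$ of $\mathcal{S}_n\Box K_{1,3}$; I want $cr_D\ge 3n$. With $F_i:=t_i\cup b_i\cup b'_i$ as in Lemma~\ref{lem-3n}, the sets $F_0,\dots,F_{n-1}$ are pairwise edge-disjoint, every edge of $F_i$ carrying the index $i$. If $cr_D(F_i)\le 2$ for all $i$, Lemma~\ref{lem-3n} already gives $cr_D\ge 3n$, so assume $A:=\{\,i : cr_D(F_i)\ge 3\,\}$ is non-empty and write $B$ for the set of remaining indices.

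For each $i$ let $N_i$ count the crossings of $D$ whose two edges both lie in $H_i$ but in two different sets among $a_i\cup t_{i+1}$, $b_i\cup b'_i$, $c_i\cup t_{i-1}$, $t_i$; Lemma~\ref{lem-K133} gives $N_i\ge 3$ for every $i$. Comparing the edge sets of $H_i$ and $H_j$, $j\ne i$, a single crossing is counted by two of the $N_j$ only when one edge lies in $t_k$ and the other in $t_{k+1}$ for some $k$; such a crossing has $cr_D(t_{k+1},F_k)\ge1$ and $cr_D(t_k,F_{k+1})\ge1$, so by the contrapositive of the step ``$cr_D(t_{k\pm1},F_k)=0$'' in the proof of Lemma~\ref{lem-3n} it forces both $k,k+1\in A$. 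The goal is to credit each index with at least three of the crossings feeding into the $N_i$, never reusing one, and then sum. For $i\in B$ this is immediate: the proof of Lemma~\ref{lem-3n} derives inequality~(\ref{k331-eq2}) from $cr_D(F_i)\le2$ alone, and its left-hand side counts (as remarked there) at least three crossings that are never counted for any other index — and, a glance at the admissible edge pairs shows, are not among the $N_j$ of any $j\ne i$ either — so each $i\in B$ may be credited with those.

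For $i\in A$ I would credit $i$ with the $N_i$ crossings, breaking each shared $t_k$-$t_{k+1}$ tie in favour of $k$; then $i$ keeps $N_i$ minus the number $q_i$ of $t_{i-1}$-$t_i$ crossings of $D$. The main obstacle is showing this still leaves at least three, i.e.\ $N_i\ge 3+q_i$. I expect to get this from a local study of the subdrawing of $H_i$ in the spirit of Lemma~\ref{lem-3n}: a crossing of $t_{i-1}$ with $t_i$, together with the $2$-connectedness of the subgraph induced by $F_i$, the value $cr(P_2\Box K_{1,3})=1$, and the crossing numbers of the complete tripartite graphs obtained by deleting an arm of $H_i\cong K_{1,3,3}$ (computed by Asano), forces further crossings between two of the four edge sets of $H_i$ beyond the three of Lemma~\ref{lem-K133}, and one must count these precisely. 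Should the per-index inequality prove too brittle, the fallback is a run-by-run argument: every $t_k$-$t_{k+1}$ crossing lies inside a maximal block of consecutive indices of $A$, and one shows instead that a block of length $\ell$ can be credited with $3\ell$ distinct crossings in total. Either way, once every index carries at least three distinct crossings, summing over $i=0,\dots,n-1$ gives $cr_D\ge 3n$, which with Theorem~\ref{thm-upper} finishes the proof.
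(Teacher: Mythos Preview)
Your route is genuinely different from the paper's. You attempt a direct counting argument: credit each index $i$ with three crossings that are never reused, handling $i\in B$ via inequality~(\ref{k331-eq2}) and $i\in A$ via $N_i$ after resolving the $t_k$--$t_{k+1}$ overlaps. The paper instead argues by \emph{induction on $n$}: the base case $n=3$ is settled computationally, and for the inductive step one supposes a drawing $D$ of $\mathcal{S}_k\Box K_{1,3}$ with fewer than $3k$ crossings, invokes Lemma~\ref{lem-3n} to find an index $i$ with $cr_D(F_i)\ge 3$, deletes the edges of $F_i$, and observes that the resulting graph is homeomorphic to $\mathcal{S}_{k-1}\Box K_{1,3}$ while now being drawn with fewer than $3(k-1)$ crossings --- contradicting the inductive hypothesis. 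The deletion trick sidesteps entirely the bookkeeping you are wrestling with on the set $A$.

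Your proposal has a real gap at exactly that point. The structural analysis is sound --- the $F_i$ are pairwise disjoint, the only crossings shared by two $N_j$'s are $t_k$--$t_{k+1}$ crossings, and any such crossing does force $k,k+1\in A$ by the argument inside the proof of Lemma~\ref{lem-3n}. But the crucial inequality $N_i\ge 3+q_i$ for $i\in A$ is neither proved nor obviously true: you would need to show that each $t_{i-1}$--$t_i$ crossing in the subdrawing of $H_i$ forces an \emph{extra} inter-set crossing beyond the three guaranteed by Lemma~\ref{lem-K133}, and nothing in Asano's result or the $cr(P_2\Box K_{1,3})=1$ fact gives this directly. Your fallback ``run-by-run'' sketch is likewise only a plan. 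By contrast, the paper's inductive argument needs nothing beyond Lemma~\ref{lem-3n} and the single observation that $\big(\mathcal{S}_n\Box K_{1,3}\big)\setminus F_i$ is homeomorphic to $\mathcal{S}_{n-1}\Box K_{1,3}$; the price is a computer-verified base case, which your approach --- were it completed --- would avoid.
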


\begin{proof}We will prove the result by induction. The base case where $n = 3$, corresponding to a graph on 24 vertices, was proved computationally, utilising the exact crossing minimisation methods of Chimani and Wiedera \cite{chimani}, which are available at http://crossings.uos.de. The proof comes from a solution to an appropriately constructed integer linear program and shows that $cr(\mathcal{S}_3 \Box K_{1,3}) = 9.$ The proof file is available and can be provided by the corresponding author if desired.

Now, assume that $cr(\mathcal{S}_n \Box K_{1,3}) = 3n$ for each $n = 3, \hdots, k-1$, but that for $n = k$ there exists a drawing with strictly fewer than $3k$ crossings. Let $D$ denote such a drawing. By Lemma \ref{lem-3n}, there must be at least one $i$ such that the edges of $F_i$ are crossed at least three times in $D$. Hence, the edges $F_i$ could be deleted and the number of crossings remaining would be strictly less than $3(k-1)$. However, once $F_i$ is deleted, the resulting graph is homeomorphic to $\mathcal{S}_{k-1} \Box K_{1,3}$, which by the inductive assumption has crossing number equal to $3(k-1)$. This is a contradiction, and hence any drawing for $n=k$ must have at least $3k$ crossings.  This, combined with Theorem \ref{thm-upper} implies that $cr(\mathcal{S}_k \Box K_{1,3}) = 3k$, and inductively we obtain the result. \end{proof}

We conclude by conjecturing that the upper bound described in Theorem \ref{thm-upper} coincides precisely with the crossing number in all cases. To provide evidence supporting this conjecture, we used QuickCross \cite{quickcross}, a recently developed crossing minimization heuristic, to find good drawings of $\mathcal{S}_n \Box K_{1,m}$ for $n,m \leq 20$. In all cases, QuickCross was able to find an embedding that agrees with the conjecture but was never able to find an embedding with fewer crossings.

\begin{conj}For $n \geq 3$, $m \geq 1$,
$$cr(\mathcal{S}_n \Box K_{1,m}) = n\frac{m(m-1)}{2}$$\end{conj}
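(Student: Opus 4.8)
We sketch a possible route to this conjecture. The plan would be to push the inductive argument used for $m=3$ through to arbitrary $m$. The upper bound $U(n,m)=n\frac{m(m-1)}{2}$ is already established in Theorem~\ref{thm-upper}, so only the matching lower bound is needed, and two structural observations make the $m=3$ proof look generalisable. First, suppressing degree-two vertices shows that $H_i$ is homeomorphic to $K_{1,3,m}$ for every $m\geq 1$: the part of size $1$ is $(v_0,u_i)$, the part of size $3$ is $\{(v_0,u_{i+1}),(v_0,u'_i),(v_0,u_{i-1})\}$, and the part of size $m$ is $\{(v_j,u_i):1\leq j\leq m\}$. Asano's value $cr(K_{1,3,m})=2\lfloor m/2\rfloor\lfloor(m-1)/2\rfloor+\lfloor m/2\rfloor$ equals $\frac{m(m-1)}{2}$ (a one-line check on the parity of $m$), which is exactly $U(n,m)-U(n-1,m)$; this is the precise analogue of the coincidence $cr(K_{1,3,3})=3$ used above. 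Second, $F_i:=t_i\cup b_i\cup b'_i$ is isomorphic to $K_{1,m}\Box K_2$, hence planar, and $F_i\cup c_i\cup t_{i-1}$ (and symmetrically $F_i\cup a_i\cup t_{i+1}$) is isomorphic to $K_{1,m}\Box P_2$, whose crossing number is $\lfloor m/2\rfloor\lfloor(m-1)/2\rfloor$ by \cite{jendrolscerbova,bokal}. These play the roles that planarity of $F_i$ and $cr(P_2\Box K_{1,3})=1$ played in the $m=3$ case.

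With these in hand, the scheme is: (i) generalise Lemma~\ref{lem-K133} by substituting $cr(K_{1,3,m})=\frac{m(m-1)}{2}$ for Asano's $cr(K_{1,3,3})=3$, so that in any good drawing of $H_i$ one has $cr_D(a_i\cup t_{i+1},F_i)+cr_D(c_i\cup t_{i-1},F_i)+cr_D\big((a_i\cup t_{i+1}),(c_i\cup t_{i-1})\big)+cr_D(F_i,F_i)\geq\frac{m(m-1)}{2}$; (ii) generalise Lemma~\ref{lem-3n}, showing that if $cr_D(F_i)\leq\frac{m(m-1)}{2}-1$ for all $i$ then $cr_D(t_{i\pm1},F_i)=0$, so the inequality of (i) collapses to $cr_D(a_i,F_i)+cr_D(c_i,F_i)+cr_D\big((a_i\cup t_{i+1}),(c_i\cup t_{i-1})\big)+cr_D(F_i,F_i)\geq\frac{m(m-1)}{2}$, whose terms are — as for $m=3$ — not double-counted across distinct $i$, and then sum over $i$; (iii) induct on $n$: a drawing with fewer than $n\frac{m(m-1)}{2}$ crossings would, by (ii), have some $i$ with $cr_D(F_i)\geq\frac{m(m-1)}{2}$, and deleting $F_i$ gives a graph homeomorphic to $\mathcal{S}_{n-1}\Box K_{1,m}$, contradicting the inductive hypothesis.

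The crux is step (ii). For $m=3$ one only has to rule out $cr_D(t_{i+1},F_i)\in\{1,2\}$, and the harder sub-case is closed by exhibiting the unique planar drawing of $F_i$ and noting that the endpoints of $t_i$ then fail to lie on a common face, so $c_i$ must cross $F_i$ again. For general $m$, $cr_D(t_{i+1},F_i)$ ranges over many values, the number of essentially different low-crossing drawings of $F_i$ grows, and what is required is a uniform forcing statement — roughly, that in a drawing of $K_{1,3,m}$ in which the ``$F_i$-layer'' carries fewer than $\frac{m(m-1)}{2}$ crossings, any crossing at all between $t_{i+1}$ and $F_i$ forces enough additional crossings onto $F_i$ (via the two $K_{1,m}\Box P_2$ bounds together with the $2$-connectivity of $F_i$) to exceed the budget. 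Obtaining this cleanly is where we expect essentially all of the difficulty to lie. A secondary obstacle is the base case: $\mathcal{S}_3\Box K_{1,m}$ has $6(m+1)$ vertices, so the single computer verification used for $m=3$ does not extend; one would need an independent proof that $cr(\mathcal{S}_3\Box K_{1,m})=3\frac{m(m-1)}{2}$ — perhaps by a parallel induction on $m$ that deletes the copies of $v_m$ and reduces to $\mathcal{S}_3\Box K_{1,m-1}$, using $cr(K_{1,3,m})$ — or a reorganisation of the whole argument as a double induction on $(m,n)$ based at $(3,3)$.
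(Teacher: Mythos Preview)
The statement you are addressing is a \emph{conjecture} in the paper, not a theorem; the paper offers no proof at all, only heuristic evidence from QuickCross computations for $n,m\leq 20$. There is therefore nothing to compare your proposal against. What you have written is not a proof either --- and you are explicit about this --- but rather a carefully reasoned programme for attacking the conjecture by generalising the $m=3$ argument.

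As a strategy it is sound in its broad architecture. Your identification of $H_i$ with $K_{1,3,m}$ is correct, and the coincidence $cr(K_{1,3,m})=\tfrac{m(m-1)}{2}$ (Asano) is exactly what one needs for the induction step to have the right arithmetic; likewise the observation that $F_i\cup c_i\cup t_{i-1}\cong K_{1,m}\Box P_2$ with known crossing number $\lfloor m/2\rfloor\lfloor(m-1)/2\rfloor$ provides the right auxiliary bound. You have also correctly isolated the two genuine obstacles.

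The first, step~(ii), is the real gap. For $m=3$ the paper disposes of $cr_D(t_{i+1},F_i)\in\{1,2\}$ by ad hoc arguments that lean heavily on there being essentially one planar drawing of $F_i$ to inspect. For general $m$ the budget $\tfrac{m(m-1)}{2}-1$ grows quadratically, the number of low-crossing drawings of $F_i\cong K_{1,m}\Box K_2$ proliferates, and the two $K_{1,m}\Box P_2$ bounds together only give roughly $\tfrac{m(m-1)}{4}$ forced crossings on $F_i$, which is not by itself enough to exceed the budget. A uniform forcing lemma of the kind you describe would need genuinely new structural input; nothing in the paper or in your sketch supplies it. The second obstacle, the base case $n=3$, is also real: the paper's computer verification for $(n,m)=(3,3)$ does not scale, and your suggested induction on $m$ via deletion of the $v_m$-copies would itself require an analogue of step~(ii), so the two difficulties are entangled.

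In short: your outline is the natural line of attack and your diagnosis of where it breaks is accurate, but the conjecture remains open and your proposal does not close it.
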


\end{document}